\newcommand{\Z}{\mathbb Z}
\newcommand{\Q}{\mathbb Q}
\newtheorem{Theorem}{Theorem}[section]
\numberwithin{equation}{section}
\numberwithin{figure}{section}
\title{On the Lie algebras of surface pure braid groups}
\author[Enriquez]{B.~Enriquez}
\address{IRMA (CNRS), rue Ren\'e Descartes, F-67084 Strasbourg, France}
\email{enriquez@@math.u-strasbg.fr}
\author[Vershinin]{V.~V.~Vershinin}%$^{*}$}
\address{D\'epartement des Sciences Math\'ematiques,
                                     Universit\'e Montpellier II,
Place Eug\`ene Bataillon,
34095 Montpellier cedex 5, France}
\email{ vershini@math.univ-montp2.fr}
\address{Sobolev Institute of Mathematics, Novosibirsk 630090,
Russia } 
\email{ versh@math.nsc.ru}
\subjclass[2000]{Primary 20F36; Secondary 17B, 57M}
\keywords{Pure braid group, Lie algebra,  presentation}
\begin{document}
\begin{abstract}
We consider the Lie algebra associated with the
descending central series filtration of the pure braid group of a closed surface
of arbitrary genus.
R.~Bezrukavnikov gave a presentation of this Lie algebra over the rational 
numbers. We show that his presentation remains true for this
Lie algebra itself, i.\,e. over integers.
\end{abstract}
\maketitle
\tableofcontents

\section{Introduction}

Let $S_{g,b}$ be an oriented surface of genus $g$ with $b$ boundary components.
We denote by $Br_n(S_{g,b})$ the braid group on $n$ strings of the surface 
$S_{g,b}$
and by $P_n(S_{g,b})$ the pure braid group  on $n$ strings 
of the surface $S_{g,b}$.
There exists an exact sequence:
\begin{equation*}
1 \to {P}_n(S_{g,b}) \to {Br}_n(S_{g,b}) \to \Sigma_{n}\to 1,
\end{equation*}
where ${Br}_n(S_{g,b}) \to \Sigma_{n}$ is a natural epimorphism to the 
symmetric group  
$\Sigma_{n}$, so, $P_n(S_{g,b})$ is its kernel.
The classical braid groups are the braid groups of a disc: 
$Br_n \cong Br_n(S_{0,1})$,
$P_n \cong P_n(S_{0,1})$. 

In the paper we consider the pure braid group of a  closed orientable 
surface of genus $g$ with 
no boundary components, ${P}_n(S_{g,0})$, which we denote for
simplicity by ${P}_n(S_{g,})$ 

Usually  the braid group $Br_n$ 
is given by the following Artin presentation \cite{Art1}.
It has the generators $\sigma_i$, 
$i=1, ..., n-1$, and the two types of relations: 
\begin{equation}
 \begin{cases} \sigma_i \sigma_j &=\sigma_j \, \sigma_i, \ \
\text{if} \ \ |i-j|
>1,
\\ \sigma_i \sigma_{i+1} \sigma_i &= \sigma_{i+1} \sigma_i \sigma_{i+1} \, .
\end{cases} \label{eq:brelations}
\end{equation}

The generators $a_{i,j}$, $1\leq i<j\leq n $ for the pure
braid group $P_n$ (of a disc) can be defined (as elements of the the braid 
group $Br_n$)  by the formula:
$$a_{i,j}=\sigma_{j-1}...\sigma_{i+1}\sigma_{i}^2\sigma_{i+1}^{-1}...
\sigma_{j-1}^{-1}.$$ 
Then the defining relations, which are called the \emph{Burau
relations} \cite{Bu1}, \cite{Mar2} are as follows: 
%\begin{multline}
\begin{equation}
\begin{cases}
a_{i,j}a_{k,l}=a_{k,l}a_{i,j}
\ \text {for} \ i<j<k<l \ \text {and} \ i<k<l<j, \\
a_{i,j}a_{i,k}a_{j,k}=a_{i,k}a_{j,k}a_{i,j} \ \text {for} \
i<j<k, \\
a_{i,k}a_{j,k}a_{i,j}=a_{j,k}a_{i,j}a_{i,k} \ \text
{for} \ i<j<k, \\
a_{i,k}a_{j,k}a_{j,l}a_{j,k}^{-1}=a_{j,k}a_{j,l}a_{j,k}^{-1}a_{i,k}
\ \text {for} \ i<j<k<l.\\
\end{cases}
\label{eq:burau}
\end{equation}
%{multline}

\smallskip

\noindent
It was proved by O.~Zariski \cite{Za1} and then
rediscovered by E.~Fadell and J.~Van Buskirk \cite{FaV} that a presentation 
for the braid group
of a sphere can be given with the   generators 
$\sigma_i$, $i=1, ..., n-1$, the same as for the classical braid group, satisfying
the braid relations (\ref{eq:brelations})
and the following sphere relation: 
\begin{equation*}
\sigma_1 \sigma_2 \dots \sigma_{n-2}\sigma_{n-1}^2\sigma_{n-2} \dots
\sigma_2\sigma_1 =1.
%\label{eq:spherelation}
\end{equation*}
 
For the pure braid group on a sphere let us introduce the elements
$a_{i,j}$ for all $i, j$ by the formulae:  
%\begin{multline}
\begin{equation}
\begin{cases}
a_{j,i}= a_{i,j} \ \ \text{for} \ i<j\leq n,\\
a_{i,i}= 1. 
\label{eq:aji}
%\end{equation*}
\end{cases}
\end{equation}
%{multline}
The pure braid group for the sphere has the generators $a_{i,j}$
which satisfy Burau relations (\ref{eq:burau}), relations (\ref{eq:aji}),
 and the following relations \cite{GVB}:
\begin{equation*}
a_{i,i+1}a_{i,i+2} \dots a_{i,i+n-1} = 1 \ \ \text{for all} \ i\leq n,\\
\end{equation*}
where  $k+n$ is considered modulo $n$ having in mind (\ref{eq:aji}).

For a group $G$ the descending central series 
\begin{equation*}
G =\Gamma_1  > \Gamma_2 > \dots  > \Gamma_i > \Gamma_{i+1} > \dots .
\end{equation*}
\noindent
is defined by the formulas
\begin{equation*}
\Gamma_1 = G, \ \ \Gamma_{i+1} =[\Gamma_i, G].
\end{equation*}
It gives rise to the 
associated graded Lie algebra (over $\Z$) $gr^*(G)$ \cite{Serr}:
\begin{equation*}
gr^i(G)= \Gamma_i/\Gamma_{i+1}.
\end{equation*}

 A presentation of the Lie algebra $gr^*(P_n)$ for the pure braid group
can be described as follows \cite{K}. It is the quotient of the
free Lie algebra $L[A_{i,j}| \, 1 \leq i < j \leq n]$, generated by
elements $A_{i,j}$ with $1 \leq i < j \leq n$, modulo the
``infinitesimal braid relations" or ``horizontal $4T$ relations"
given by the following three relations:

\begin{equation*}
\begin{cases}
 [A_{i,j}, A_{s,t}] = 0, \  \text{if} \ \{i,j\} \cap \{s,t\} = \phi, \\
 [A_{i,j}, A_{i,k} + A_{j,k}] = 0, \  \text{if} \ i<j<k , \\
  [A_{i,k}, A_{i,j} + A_{j,k}] = 0, \ \text{if} \ i<j<k. \\
  \end{cases}
 % \label{eq:kohno}
\end{equation*}

\smallskip

A similar presentation of the Lie algebra of descending central series 
filtration of the upper triangular McCool group is obtained in \cite{CPVW}.

Y.~Ihara in \cite{Ih} gave  a presentation of the Lie algebra $gr^*(P_n(S^2))$ 
of the pure braid group of the sphere. It is convenient to 
have conventions like (\ref{eq:aji}). Hence, $gr^*(P_n(S^2))$ 
is the quotient of the
free Lie algebra $L[B_{i,j}| \, 1 \leq i ,j \leq n]$ generated by
elements $B_{i,j}$ with $1 \leq i , j \leq n$ modulo the
 following  relations:
\begin{equation}
\begin{cases}
B_{i,j} =  B_{j,i} \  \text{for} \ 1\leq i,j \leq n , \\
B_{i,i} =  0 \ \ \text{for} \ 1\leq i \leq n , \\
 [B_{i,j}, B_{s,t}] = 0, \  \text{if} \ \{i,j\} \cap \{s,t\} = \phi, \\
 \sum_{j=1}^n B_{i,j} = 0, \  \text{for} \ 1\leq i \leq n. \\
    \end{cases}
    \label{eq:ihara}
\end{equation}
Also a presentation of the Lie algebra $gr^*(P_n(S^2))$ can be given with 
generators
$A_{i,j}$ with $1 \leq i< j  \leq n-1$,  modulo 
%the relations (\ref{eq:m0n2}) and 
the following  relations \cite{KV}:
\begin{equation*}
\begin{cases}
 [A_{i,j}, A_{s,t}] = 0, \  \text{if} \ \ \{i,j\} \cap \{s,t\} = \phi, \\
 \smallskip
2  (\sum_{i=1}^{n-2}\sum_{j=i+1}^{n-1} A_{i,j}) = 0. 
 \end{cases}
  %  \label{eq:kohno}
\end{equation*}
So, the element $\sum_{i=1}^{n-2}\sum_{j=i+1}^{n-1} A_{i,j}$ of order 2 generates
the central subalgebra in  $gr^*(P_n(S^2))$. 

 We study the   natural Lie algebra
obtained from the descending central series for $(P_n(S_{g})$ for $g\geq 1$,
for $g=0$ the structure of  $gr^*(P_n(S^2))$ is given by the result of 
Y.~Ihara (\ref{eq:ihara}). 

The essential ingredient in the study of the pure braid groups is a natural
fibration of configuration spaces and its initial term of the homotopy exact
 sequence.
Let $F(S_g,n )$ be the space of $n$-tuples of pairwise different points 
in $S_{g} $, then we have the following fibration:
\begin{equation*}
S_{g}\setminus Q_{n-1} \to F(S_{g},n)\to F(S_{g},n-1 ),
%\label{eq:conf}
\end{equation*}
where $S_{g}\setminus Q_{n-1}$ is the surface with $n-1$ points
deleted and $ F(S_{g},n)\to F(S_{g},n-1 )$ is the projection on the
first $n-1$ components of an $n$-tuple; $S_{g}\setminus Q_{n-1}$ is a fiber 
of the fibration.
This fibration generates the exact sequence of groups:
\begin{equation}
1 \to {\pi}_1(S_{g}\setminus Q_{n-1})\to {P}_n(S_{g})\to {P}_{n-1}(S_{g})\to 1,
\label{eq:ex_s_p}
\end{equation}
where ${\pi}_1(S_{g}\setminus Q_{n-1})$ is a $(g, n-1)$-{\it surface group}; 
it is a free group on $2g+n-2$ generators and it has the canonical presentation
\begin{equation*}
\pi_{g,n-1}={\pi}_1(S_{g}\setminus Q_{n-1})= <a_1, c_1, \dots, a_g, c_g, u_1, 
\dots, u_{n-1} \
 | \ \prod_{i=1}^{n-1}u_i  \prod_{m=1}^{g}[a_m,c_m]=1>.  
% \label{eq:pure_n-1}
 \end{equation*}
If we consider the descending central series filtration of the groups of 
this exact sequence and apply the functor of the associated Lie algebras, then the 
corresponding sequence will be not left exact for $n\geq 3$:
\begin{equation*}
 gr^*({\pi}_1(S_{g}\setminus Q_{n-1}))\to gr^*({P}_n(S_{g}))\to 
 gr^*({P}_{n-1}(S_{g}))\to 1.
\end{equation*}
To fix the situation another filtration was introduced by M.~Kaneko
\cite{Kan} and H.~Nakamura and H.~Tsunogai \cite{NT}. The authors call
it the {\it{weight filtration}}. Roughly speaking the difference with
respect to the descending central series filtration is that the elements
 $ u_1, \dots, u_{n-1}$ are given the grading two instead of one. 
For $\pi_{g,0}$ and $\pi_{g,1} $ it coincides with the descending 
central series filtration.
\begin{equation*}%{multline}
\pi_{g,k}(1) = \pi_{g,k}, \\
\end{equation*}
\begin{equation*}
\pi_{g,k}(2) = [\pi_{g,k}, \pi_{g,k}]<u_1, \dots, u_k >, 
\\
\end{equation*}
\begin{equation*}
\pi_{g,k}(m) = <[\pi_{g,k}(i), \pi_{g,k}(j)] \ | \, i+j=m >, m\geq 3. 
\end{equation*}%{multline}

%\begin{equation*}%{multline}
$${P}_n(S_{g})(1) = {P}_n(S_{g}), \\$$
$$
{P}_n(S_{g})(2) = [{P}_n(S_{g}), {P}_n(S_{g})] \, \pi_{g,n-1}(2), 
\\$$
$$
{P}_n(S_{g})(m) = <[{P}_n(S_{g})(i), {P}_n(S_{g})(j)] \ | \, i+j=m >, m\geq 3. 
$$
%\end{equation*}%{multline}

It is proved by H.~Nakamura, N.~Takao and R.~Ueno \cite{NTU} that the 
sequence of Lie algebras associated to the weight
filtration and corresponding to the sequence (\ref{eq:ex_s_p})

\begin{equation*}
1\to gr^*_w({\pi}_1(S_{g}\setminus Q_{n-1}))\to gr^*_w({P}_n(S_{g}))\to 
 gr^*_w({P}_{n-1}(S_{g}))\to 1
\end{equation*}
is exact (even for a punctured surface).

\section{Lie algebra $gr^*(P_n(S_{g}))$ %\label{sec:ib_imc}
}

R.~Bezrukavnikov \cite{Bez} determined the Lie algebra 
$gr^*(P_n(S_{g}))\otimes \Q$. It has the generators $a_{l,i}$, $b_{l,i}$
$1\leq l \leq g$, $1\leq i \leq n$, so $l$ corresponds to the genus of a surface
and $ i$ corresponds to the number of strings. Relations are as follows
\begin{multline}
\begin{cases}
[a_{l,i}, b_{k,j}]&=0 \ \ \text{for} \ i\not=j, \ l\not=k, \\
[a_{l,i}, a_{k,j}]&=0 \ \ \text{for} \ i\not=j,  \\
[b_{l,i}, b_{k,j}]&=0 \ \ \text{for} \ i\not=j,  \\
[a_{l,i}, b_{l,j}]&=[a_{k,j}, b_{k,i}] \ \ \text{for all } k, l \ 
\text{and} \ i\not=j,  \  \text{denote it by } \ s_{i,j},  \\
\sum_{l=1}^g[a_{l,i}, b_{l,i}]&= -\sum_{j\not=i} s_{i,j}, \\
[a_{l,i}, s_{j,k}]&=0 \ \ \text{for} \ i\not=j\not=k, \\
[b_{l,i}, s_{j,k}]&=0 \ \ \text{for} \ i\not=j\not=k. \\
\end{cases}
\label{eq:bez}
\end{multline}
 
The aim of the present work is to prove that this presentation is true over 
$\Z$. 

The Lie algebra  associated with the weight filtration for the pure braid group 
of a punctured surface was described by  H.~Nakamura, N.~Takao and R.~Ueno
 in \cite{NTU}.
For a surface without 
punctures the description of H.~Nakamura, N.~Takao and R.~Ueno
coincides with that of R.~Bezrukavnikov.

\begin{Theorem}  For the pure braid group of a closed surface of the genus $g$, $gr^*({P}_n(S_{g}))$, the descending central series
filtration and the weight filtration coincide. 
The generators $a_{l,i}$, $b_{l,i}$
$1\leq l \leq g$, $1\leq i \leq n$, and the Bezrukavnikov relations 
(\ref{eq:bez}) give a presentation of the graded Lie algebra 
$gr^*(P_n(S_{g}))$.
%\label{theo:gil-vb} 
\end{Theorem}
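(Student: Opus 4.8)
The plan is to argue by induction on the number $n$ of strands, comparing the integrally presented Lie algebra with the weight-filtration Lie algebra and invoking the exact sequence of Nakamura--Takao--Ueno. Write $\mathcal{L}_n$ for the graded Lie algebra over $\Z$ defined by the generators $a_{l,i},b_{l,i}$ and the Bezrukavnikov relations (\ref{eq:bez}), with each $a_{l,i},b_{l,i}$ placed in degree $1$. Since the relations (\ref{eq:bez}) hold in $gr^*_w(P_n(S_{g}))$ (this is the description of \cite{NTU}), there is a canonical surjection of graded Lie algebras $\phi_n\colon \mathcal{L}_n\to gr^*_w(P_n(S_{g}))$, and by Bezrukavnikov's rational result together with the Nakamura--Takao--Ueno identification, $\phi_n\otimes\Q$ is an isomorphism. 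The whole problem is therefore to promote this to an integral isomorphism and then to show that the weight and descending central filtrations agree.

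The base case is $n=1$: here $P_1(S_{g})=\pi_1(S_{g})$ is the closed surface group, the two filtrations coincide (as already noted for $\pi_{g,0}$), and $\mathcal{L}_1$ is the free Lie algebra on $a_{l,1},b_{l,1}$ modulo the single degree-two relation $\sum_l[a_{l,1},b_{l,1}]=0$; this is the initial form of the surface relation, whence $\phi_1$ is an isomorphism onto a torsion-free algebra. For the inductive step I would run the fibration (\ref{eq:ex_s_p}) through both sides. On the topological side \cite{NTU} give the short exact sequence of graded Lie algebras
\[
0\to gr^*_w(\pi_{g,n-1})\to gr^*_w(P_n(S_{g}))\to gr^*_w(P_{n-1}(S_{g}))\to 0 .
\]
I would manufacture the matching algebraic sequence for the presented algebras: let $\mathcal{K}_n\subset\mathcal{L}_n$ be the Lie ideal generated by $a_{l,n},b_{l,n}$ ($1\le l\le g$). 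Using (\ref{eq:bez}) one checks that the assignment killing $a_{l,n},b_{l,n}$ (hence every $s_{i,n}$) is a well-defined surjection $\mathcal{L}_n\to\mathcal{L}_{n-1}$ of graded Lie algebras with kernel exactly $\mathcal{K}_n$. The key computation is to identify $\mathcal{K}_n$ with the weight-graded Lie algebra $gr^*_w(\pi_{g,n-1})$: the $a_{l,n},b_{l,n}$ sit in degree $1$, the elements $s_{i,n}=s_{n,i}$ ($i<n$) sit in degree $2$, and the Bezrukavnikov relation $\sum_l[a_{l,n},b_{l,n}]=-\sum_{i<n}s_{i,n}$ is precisely the degree-two part of the surface relation $\prod_i u_i\prod_m[a_m,c_m]=1$. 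Thus $\mathcal{K}_n$ is freely generated, in the weight grading, by $2g$ degree-one elements and $n-2$ degree-two elements; in particular it is torsion free and $\mathcal{K}_n\to gr^*_w(\pi_{g,n-1})$ is an isomorphism. With $\phi_{n-1}$ an isomorphism by induction and the fiber map an isomorphism, the five lemma forces $\phi_n$ to be an isomorphism, and since $\mathcal{K}_n$ and $\mathcal{L}_{n-1}$ are torsion free, so is the extension $\mathcal{L}_n\cong gr^*_w(P_n(S_{g}))$.

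It remains to identify $gr^*_w$ with $gr^*$. Both are $N$-series with first term $P_n(S_{g})$, and the weight filtration is, by construction, the fastest-descending $N$-series in which the puncture loops $u_i$ lie in degree two; hence $\Gamma_m\subseteq P_n(S_{g})(m)$ for all $m$, and the two filtrations coincide if and only if $u_i\in\Gamma_2=[P_n(S_{g}),P_n(S_{g})]$. For $g\ge1$ this is the vanishing of the classes of $u_i$ in $H_1(P_n(S_{g}))$, which I would establish by abelianizing a presentation of the surface pure braid group: the outcome is the computation $H_1(P_n(S_{g}))\cong\Z^{2gn}$, freely generated by the classes of the handle generators $a_{l,i},b_{l,i}$, so that every crossing (puncture) generator, $u_i$ included, is a product of commutators. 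Granting $u_i\in\Gamma_2$, an immediate induction on $m$, using $\Gamma_m=[\Gamma_{m-1},\Gamma_1]$ on one side and the bracket-generation of the weight filtration on the other, yields $P_n(S_{g})(m)=\Gamma_m$ for all $m$. Therefore $gr^*=gr^*_w\cong\mathcal{L}_n$, which is the assertion of the theorem.

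The main obstacle, I expect, is the integral identification of the fiber ideal $\mathcal{K}_n$ with $gr^*_w(\pi_{g,n-1})$: one must show that the relations (\ref{eq:bez}) impose no relation inside $\mathcal{K}_n$ beyond the single degree-two surface relation, so that $\mathcal{K}_n$ is genuinely free in the weight grading and in particular torsion free; everything else reduces to extension bookkeeping and the five lemma. A secondary delicate point is the group-level fact $u_i\in[P_n(S_{g}),P_n(S_{g})]$, which is exactly what makes the descending central and weight filtrations coincide for a closed surface and which has no analogue in the punctured case.
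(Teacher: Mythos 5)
Your treatment of the second half of the theorem --- the coincidence of the descending central series with the weight filtration --- is essentially the paper's own argument: you reduce to showing $u_i\in\Gamma_2$, and you get this from $H_1(P_n(S_{g}))\cong\Z^{2gn}$. The paper phrases the same point diagrammatically: the presentation of \cite{GG} has $2gn$ generators, so $gr^1(P_n(S_{g}))$ has rank at most $2gn$, while it surjects onto $gr^1_w(P_n(S_{g}))\cong\Z^{2gn}$; hence $\Gamma_2=P_n(S_{g})(2)$, and $\Gamma_m=P_n(S_{g})(m)$ follows by induction on $m$. That part of your proposal is sound and matches the paper.

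The gap is in the first half. The paper does not re-derive the integral presentation of $gr^*_w(P_n(S_{g}))$: it quotes the theorem of Nakamura--Takao--Ueno \cite{NTU}, which already describes this Lie algebra over $\Z$, and observes that for a closed surface that description coincides with Bezrukavnikov's relations (\ref{eq:bez}). You instead set out to reconstruct it by induction on $n$ via the exact sequence coming from (\ref{eq:ex_s_p}) and the five lemma, and the crux of that induction --- that the ideal $\mathcal{K}_n\subset\mathcal{L}_n$ generated by $a_{l,n},b_{l,n}$ maps isomorphically (in particular injectively) onto the free Lie algebra $gr^*_w(\pi_{g,n-1})$ on $2g$ degree-one and $n-2$ degree-two generators --- is asserted rather than proved; you yourself flag it as ``the main obstacle.'' This is not routine bookkeeping: even the preliminary claim that $\mathcal{K}_n$ is generated as a Lie \emph{subalgebra} by the $a_{l,n},b_{l,n}$ and the $s_{i,n}$ needs a genuine computation with (\ref{eq:bez}) (for instance $[a_{l,i},s_{i,n}]$ with $i\neq n$ is not covered by any listed relation and must be reworked via the Jacobi identity and the identities $s_{i,n}=[a_{k,n},b_{k,i}]$), and showing that no further relations survive inside $\mathcal{K}_n$ over $\Z$ is precisely the technical content of the Bezrukavnikov and Nakamura--Takao--Ueno theorems. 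Your base case $\phi_1$ likewise leans on the true but nontrivial (and uncited) fact that $gr^*$ of a closed surface group is the free Lie algebra modulo the single quadratic surface relation and is torsion free. If you replace the inductive reconstruction by a citation of \cite{NTU} for the integral presentation of $gr^*_w(P_n(S_{g}))$, the rest of your argument closes up and coincides with the paper's proof.
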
 
\begin{proof}
We denote by $\Gamma_i$ the descending central series
filtration on ${P}_n(S_{g})$. 
Let us consider the presentation of a surface pure braid group  given
by D.~L.~Gonçalves and J.~Guaschi \cite{GG}. There are $2ng$ generators:
 $\rho_{i, l}$ and $\tau_{i, l}$, $1\leq l \leq g$, $1\leq i \leq n$,
 and relations of 30 types . 
\par
Evidently, we have
$\Gamma_1= {P}_n(S_{g})(1)$, $\Gamma_2\subset {P}_n(S_{g})(2)$. So, there is 
a commutative diagram of the exact sequences
\begin{equation*}
\begin{CD}
1 \ \ \to \  \Gamma_2 @>>> {P}_n(S_{g})@>>>g^1({P}_n(S_{g})) \to 1  \\
 \ \ \ \ \ \ \  @V\psi VV   @VV Id V @V\phi VV\\
1\to {P}_n(S_{g})(2)@>>>{P}_n(S_{g})@>>> g^1_w({P}_n(S_{g})) \to 1
 \end{CD} %\label{eq:comd}
\end{equation*}
The map $\phi$ is surjective and the rank of $g^1_w({P}_n(S_{g}))$ is 
equal to $2gn$. The rank of $g^1({P}_n(S_{g}))$ can not be bigger than
$2gn$, so it is an isomorphism. Hence $\psi$ is also an isomorphism.
From the definition of ${P}_n(S_{g})(m) $ it follows that
$\Gamma_m = {P}_n(S_{g})(m) $.

\end{proof}

The second author is thankful to John Guaschi for useful remarks.


\begin{thebibliography}{References}
\bibitem{Art1}
\emph{ E. Artin},
 Theorie der Z\"opfe.  Abh. Math. Semin. Univ. Hamburg, 1925, v. 4, 47--72.
\bibitem{Bez}
\emph{R.~Bezrukavnikov}, Koszul DG-algebras arising from configuration spaces.  
Geom. Funct. Anal.  4  (1994),  no. 2, 119--135.
 \bibitem{Bu1}
\emph{W.~Burau}, 
\"Uber Zopfinvarianten. (German)
Abh. Math. Semin. Hamb. Univ. 9, 117-124 (1932). 
\bibitem{CPVW} \emph{F.~R.~Cohen;  J.~Pakianathan; V.~V.~Vershinin; J.~Wu}, 
Basis-conjugating automorphisms of a free group and associated Lie algebras. 
Iwase, Norio (ed.) et al., Proc. of the conference on groups, homotopy and configuration spaces, Univ. of Tokyo, July, 2005. Coventry: Geometry and Topology Monographs 13, 147-168 (2008).
\bibitem{FaV}
\emph{E.~Fadell and J.~Van Buskirk}, The braid groups of $E\sp{2}$ and $S\sp{2}$. 
Duke Math. J. 29 1962,    243--257.
\bibitem{FR} \emph{M.~Falk, and R.~Randell}, The lower central series of a
fiber--type arrangement, Invent. Math., {\bf{82}} (1985), 77--88.
\bibitem{GVB}
\emph{R.~Gillette; J.~Van Buskirk}, The word problem and consequences for 
the braid groups and mapping class groups of the $2$-sphere.  
Trans. Amer. Math. Soc.  131.  1968. 277--296. 
\bibitem{GG}
\emph{D.~L.~Gonçalves; J.~Guaschi}, On the structure of the  surface pure
braid groups. J. Pure Appl. Algebra. 182 (2003), 33--64. 
\bibitem{Ih}
 \emph{Y.~Ihara},   Galois group and some arithmetic functions,
Proceedings of the International Congress of mathematicians, Kyoto,
1990, Springer (1991),  99-120. 
\bibitem{Kan} 
\emph{M.~Kaneko}, Certain automorphism groups of pro-$l$ fundamental groups of punctured Riemann surfaces.  J. Fac. Sci. Univ. Tokyo Sect. IA Math.  36  (1989),  no. 2, 363--372.
\bibitem{KV} 
\emph{R.~Karoui, V.~V.~Vershinin} On the Lie algebras associated with pure mapping class groups. preprint. 
\bibitem{K} \emph{T.~Kohno}, S\'erie de Poincar\'e-Koszul associ\'ee aux
groupes de tresses pure, Invent. Math., {\bf{82}} (1985), 57-75.
\bibitem{Mar2}
\emph{A. A. Markoff},  Foundations of the Algebraic Theory of
Tresses, Trudy Mat. Inst. Steklova, No~16, 1945 (Russian, English
summary).
\bibitem{NTU}
\emph{H.~Nakamura; N.~Takao; R.~Ueno}, Some stability properties of Teichmüller modular function fields with pro-$l$ weight structures.  
Math. Ann.  302  (1995),  no. 2, 197--213.
\bibitem{NT}
\emph{H.~Nakamura; H.~Tsunogai}, Some finiteness theorems on Galois centralizers in pro-$l$ mapping class groups.  J. Reine Angew. Math.  441  (1993), 115--144. 
\bibitem{Serr}
\emph{J.-P. Serre},
 Lie algebras and Lie groups. 1964 lectures given at Harvard University. Corrected fifth printing of the second (1992) edition. Lecture Notes in Mathematics, 1500. Springer-Verlag, Berlin, 2006. viii+168 pp.
\bibitem{Za1}
\emph{O.~Zariski}, On the Poincare group of rational plane curves, Am. J. Math.
 1936, 58,  607-619. 
\end{thebibliography}
\end{document}